\newcommand{\PP}{\mathcal{P}_T}
\newcommand{\N}{\mathbb{N}}
\newcommand{\Z}{\mathbb{Z}}
\newcommand{\GG}{\mathcal{G}}
\newcommand{\Aut}{\operatorname{Aut}}
\newcommand{\id}{\mathrm{I}}
\newcommand{\Id}{\mathrm{Id}}
\newcommand{\Soc}{\mathrm{Soc}}
\numberwithin{equation}{section}
\numberwithin{figure}{section}
\numberwithin{table}{section}
\newtheorem{thm}{Theorem}[section]
\newtheorem{lem}[thm]{Lemma}
\newtheorem{cor}[thm]{Corollary}
\newtheorem{pro}[thm]{Proposition}
\newtheorem{defn}[thm]{Definition}
\newtheorem{rem}[thm]{Remark}
\newtheorem{exa}[thm]{Example}
\newtheorem{thmA}{Theorem}
\begin{document}
	
	\keywords{Yang--Baxter equation, braces, indecomposable solutions, primitive solutions, diagonal map, Dehornoy class}
	
	\subjclass[2020]{
		16T25, 
		17D99, 
		08A05 
	}
	
	\title[Cabling, decomposability, and Dehornoy class]{Involutive Yang--Baxter: cabling, decomposability,  Dehornoy class}
	
	\author[V. Lebed]{Victoria Lebed}
	\address{Normandie Univ, UNICAEN, CNRS, LMNO, 14000 Caen, France}
	\email{victoria.lebed@unicaen.fr}
	
	\author[S. Ram\'irez]{Santiago Ram\'irez}
	\address{IMAS--CONICET and Depto. de Matem\'atica, FCEN, Universidad de Buenos Aires, Pabell\'on~1,
		Ciudad Universitaria, C1428EGA, Buenos Aires, Argentina}
	\email{sramirez@dm.uba.ar}
	\email{lvendramin@dm.uba.ar}
	
	\author[L. Vendramin]{Leandro Vendramin}
	\address{Department of Mathematics and Data Science, Vrije Universiteit Brussel, Pleinlaan 2, 1050 Brussel, Belgium}
	\email{Leandro.Vendramin@vub.be}
	
	\date{\today}
	
	\begin{abstract}
		We develop new machinery for producing decomposability tests for involutive solutions to the Yang--Baxter equation. It is based on the seminal decomposability theorem of Rump, and on ``cabling'' operations on solutions and their effect on the diagonal map~$T$. Our machinery yields an elementary proof of a recent decomposability theorem of Camp--Mora and Sastriques, as well as original decomposability results. It also 
		provides a conceptual interpretation (using the language of braces) of the Dehornoy class, a combinatorial invariant naturally appearing in the Garside-theoretic approach to involutive solutions. 
	\end{abstract}
	
	\maketitle
	
	\section{Introduction}
	
	A \emph{finite non-degenerate involutive set-theoretic solution to the Yang--Baxter equation}, simply called \emph{solution} in this paper, is a non-empty finite set $X$ endowed with an involutive map
	\begin{gather*}
	r\colon X\times X\to X\times X,
	\quad
	r(x,y)=(\sigma_x(y),\tau_y(x)), 
	\shortintertext{satisfying}
	r_1 r_2 r_1=r_2 r_1 r_2,
	\end{gather*}
	where the maps $r_i \colon X^3 \to X^3$ are defined by $r_1=r \times \Id_X$ and $r_2 = \Id_X \times r$, and the maps $\sigma_x$ and $\tau_x$ are bijective for all $x\in X$. Throughout the paper, we will assume that $n=|X|>1$. The origins, applications and recent results on solutions can be found in the extensive literature which followed \cite{MR1722951,MR1637256}.
	
	A solution is called \emph{decomposable} if the set $X$ decomposes into two non-empty disjoint parts $X=Y \sqcup Z$, with $r(Y\times Y) = Y\times Y$ and $r(Z\times Z) = Z\times Z$. This is equivalent to asking the \emph{permutation group} $\GG(X,r)$ of $(X,r)$, which is the group of permutations on~$X$ generated by all the $\sigma_x$, 
	 to be non-transitive \cite{MR1722951}. A natural approach to the (as for now unattainable) problem of classifying all solutions consists in constructing all indecomposable solutions \cite{MR3771874,MR3958100,MR4116644,MR4163866,MR4085769,MR4300920,MR4391683,MR4388351,C,MR4396637}, and then understanding how these building blocks can cement together \cite{MR2442072,MR3812099,MR4020748,MR4009573,MR4067191}.
	
	The first and most famous result on decomposability is the 1996 conjecture of Gateva-Ivanova, proved by Rump in 2005 \cite{MR2132760}: a \emph{square-free} solution (i.e., satisfying $r(x,x)=(x,x)$ for all $x$) is decomposable. 
	 In general, a solution needs not be square-free; however, the \emph{diagonal map} \[T \colon x \mapsto \tau_x^{-1}(x)\] always defines a permutation on $X$, satisfying \[r(T(x),x)=(T(x),x).\] 
	The permutation $T$ splits $X$ into orbits. This induces a partition of $n=|X|$, which we call the \emph{$T$-partition} of $(X,r)$ and denote by $\PP = \PP(X,r)$. 
	Two recent 
	papers \cite{RV,CMS} revealed that this simple numerical datum may suffice to determine the (in)decomposability of a solution: 
	\begin{itemize}
		\item $\PP =(1,\ldots,1)$ $\;\Rightarrow\;$ $(X,r)$ decomp. (Rump's theorem, \cite[Thm 1]{MR2132760});
		\item $\PP =(n-1,1)$ $\;\Rightarrow\;$ $(X,r)$ decomp. (\cite[Thm 3.10]{RV});
		\item $\PP =(n-2,1,1)$, $n$ odd $\;\Rightarrow\;$ $(X,r)$ decomp. (\cite[Thm 3.13]{RV});
		\item $\PP =(n-3,1,1,1)$, $3 \notdivides n$ $\;\Rightarrow\;$ $(X,r)$ decomp. (\cite[Thm 3.13]{RV});
		\item more generally, $\gcd(|T|,n)=1$ $\;\Rightarrow\;$ $(X,r)$  decomp. (Camp-Mora--Sastriques (CMS) theorem, \cite[Thm A]{CMS}); 
		\item $\PP=(n)$ $\;\Rightarrow\;$ $(X,r)$ indecomp. (\cite[Thm 3.5]{RV}).
	\end{itemize}
	
	In this paper, we give a short and elementary proof of the CMS theorem (which was originally proved using advanced group theory), and present several original decomposability results. To explain them, we need the \emph{structure group} of our solution $(X,r)$ \cite{MR1722951}. It is defined by the following presentation:
	\[G{(X,r)} = \langle X \,|\, xy = \sigma_x(y)\tau_y(x) \text{ for all } x,y \in X \rangle.\]
	It carries a second, commutative operation $+$, satisfying the following compatibility relation: 
	\begin{equation}\label{E:Brace}
	a(b+c)=ab-a+ac.
	\end{equation}
	Such ring-like structures are called \emph{braces}. They are extensively used in order to bring ring-theoretic tools into the study of the YBE: see \cite{MR2278047} and references thereto. In the present work, we will employ braces in a very different way. Namely, given a positive integer $k$, consider the map
	\begin{align*}
	\iota^{(k)} \colon X &\to G{(X,r)},\\
	x &\mapsto kx := x+\cdots + x \ (k \text{ summands}).
	\end{align*}
	
	\begin{thmA}\label{T:cabling}
		Take a solution $(X,r)$ and a positive integer $k$. The map $\iota^{(k)}$ above is injective. Its image is a sub-solution of $G{(X,r)}$.
	\end{thmA}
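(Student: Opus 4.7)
I would exploit the brace structure on $G := G(X,r)$, relying on three standard ingredients. First, the additive group $(G,+)$ is free abelian on $X$ (a classical theorem of Etingof--Schedler--Soloviev / Rump for involutive non-degenerate solutions). Second, writing $\sigma_a(b):=-a+ab$ for the left translation in the brace, the compatibility \eqref{E:Brace} rearranges to additivity $\sigma_a(b+c)=\sigma_a(b)+\sigma_a(c)$, so $\sigma_a\in\Aut(G,+)$ and hence $\sigma_a(kb)=k\,\sigma_a(b)$ for every $k\in\N$. Third, associativity of $\cdot$ gives $\sigma_{ab}=\sigma_a\sigma_b$; combined with the fact that the original solution on $X$ is of the form $r(x,y)=(\sigma_x(y),\tau_y(x))$ with values in $X$, the action $\sigma$ of $(G,\cdot)$ on itself stabilises $X$, i.e.\ $\sigma_g(X)\subseteq X$ for every $g\in G$.

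Injectivity of $\iota^{(k)}$ is then immediate: in the free abelian group $\Z X$, $kx=ky$ with $k\ge 1$ forces $x=y$.

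For the sub-solution claim I would compute both coordinates of $r(kx,ky)$, viewed inside the brace-induced solution on $G$. The first coordinate is handled by additivity alone:
\[
\sigma_{kx}(ky)\;=\;k\,\sigma_{kx}(y)\;\in\;kX\;=\;\iota^{(k)}(X),
\]
where $\sigma_{kx}(y)\in X$ by the stability property (note that $kx$ is a bona fide element of $G$, even though written additively). For the second coordinate, involutivity of the induced solution on $G$ yields $\tau_{ky}(kx)=\sigma_{\sigma_{kx}(ky)}^{-1}(kx)$, and since $\sigma_g^{-1}$ again lies in $\Aut(G,+)$, the same trick gives
\[
\tau_{ky}(kx)\;=\;k\,\sigma_{\sigma_{kx}(ky)}^{-1}(x)\;\in\;kX.
\]

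The main hazard I anticipate is purely notational: one has to keep the additive and multiplicative structures on $G$ scrupulously separate, particularly when manipulating $kx$, and to confirm at the outset that $\sigma_g$ sends $X$ into $X$ (not merely $G$ into $G$). Once the three ingredients above are on the table, the proof reduces to the two short displays.
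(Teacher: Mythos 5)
Your argument is correct, but it takes a genuinely different route from the paper's. The paper works combinatorially in powers of $X$: it writes $kx$ as the frozen word $x\,U(x)\cdots U^{k-1}(x)$ and tracks how the braiding $r_{k,l}$ transforms a pair of frozen tuples; this simultaneously establishes the sub-solution property and the explicit closed formula \eqref{E:cabling_r} for $r^{(k)}$ in terms of the original $\sigma$, $\tau$ and the diagonal map $T$, a formula that is reused throughout (for instance to compute the diagonal map of $r^{(k)}$ and to obtain the equivariance relation \eqref{E:equivar}). You instead stay inside the brace $G(X,r)$ and use only that each $\lambda_a$ is an automorphism of $(G,+)$ (so commutes with multiplication by $k$), that $\lambda$ is multiplicative in $a$, and that every $\lambda_g$ permutes the basis $X$; combined with the involutive identity $\tau_b(a)=\sigma_{\sigma_a(b)}^{-1}(a)$, this gives $R(kX\times kX)\subseteq kX\times kX$ in two lines, and equality follows from $R^2=\Id$ (or from finiteness of $X$). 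Your route is shorter and more conceptual for the statement as literally posed; what it does not deliver is the expression of the second component as $T^{k-1}\tau_{ky}T^{-k+1}(x)$ --- your form $k\,\sigma_{\sigma_{kx}(ky)}^{-1}(x)$ is equivalent but less directly exploitable in the later sections. The injectivity argument (freeness of $(G,+)$ on the basis $X$) is identical to the paper's.
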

	Here the solution $r$ is extended from $X$ to $G{(X,r)}$ in the usual way. A push-back through $\iota^{(k)}$ then defines a new solution on~$X$, called the \emph{$k$-cabled solution} $r^{(k)}$. Equation~\eqref{E:cabling_r} describes it explicitly. Some relations between a solution and its cablings are given in 
	
	\newpage 
	\begin{thmA}\label{T:cablingProperties}
		Take a solution $(X,r)$ and a positive integer $k$. 
		\begin{enumerate}
			\item The diagonal map of $(X,r^{(k)})$ is $T^k$.
			\item Let $x\in X$ lie in a $\GG{(X,r)}$-orbit of size $m$, and in a $\GG{(X,r^{(k)})}$-orbit of size $m'$. Then $m'$ is a multiple of the maximal divisor $m_k$ of $m$ which is coprime to $k$: $m_k \ | \ m$ and  $\gcd(m_k, k) = 1$.
		\end{enumerate}
	\end{thmA}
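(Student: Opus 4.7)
I will unfold the defining relation of the diagonal of $(X, r^{(k)})$ and transport it through $\iota^{(k)}$ to the brace $G(X,r)$. Writing $T^{(k)}$ for this diagonal, the condition $r^{(k)}(T^{(k)}(x), x) = (T^{(k)}(x), x)$ translates to $r(k T^{(k)}(x), k x) = (k T^{(k)}(x), k x)$ inside $G(X,r)$, and uniqueness of the diagonal in the extended solution then gives $k\, T^{(k)}(x) = T(kx)$. Part~(1) therefore reduces to the brace identity $T(ka) = k\,T^{k}(a)$ for every $a \in X$.

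To prove this identity I will first establish, by induction on $j$, the product formula
\[
j\, T^{j}(a) \;=\; T^{j}(a)\cdot T^{j-1}(a)\cdots T(a) \qquad (\text{product in } (G(X,r),\cdot)).
\]
The inductive step uses the brace relation $u + v = u\cdot\sigma_u^{-1}(v)$ together with the iterated defining property $\sigma_{T^{j}(a)}(T^{j-1}(a)) = T^{j}(a)$. Multiplicativity of the map $g\mapsto\sigma_g$ then yields $\sigma_{k T^{k}(a)} = \sigma_{T^{k}(a)}\circ\cdots\circ\sigma_{T(a)}$, and applying this composition to $a$ telescopes down to $T^{k}(a)$ via repeated use of the same defining property. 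Since each $\sigma_g$ is additive, $(k T^{k}(a))\cdot(k a) = k T^{k}(a) + k\,\sigma_{k T^{k}(a)}(a) = 2 k T^{k}(a)$, and by the characterization $T(b)\cdot b = 2T(b)$ of the diagonal this forces $T(ka) = k\,T^{k}(a)$. I anticipate no serious obstacles here, as every step unfolds directly from the brace axioms and the defining property of~$T$.

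\textbf{Plan for part (2).} Additivity of $\sigma_g$ gives $\sigma_{ky}(kx) = k\,\sigma_{ky}(x)$, so the cabled permutation $\sigma'_y$ on $X$ coincides with the restriction of $\sigma_{ky}$ to $X$. Combining this with the product formula of part~(1), read in the reversed form $k a = a\cdot T^{-1}(a)\cdots T^{-(k-1)}(a)$, yields the factorization
\[
\sigma_{ky} \;=\; \sigma_y\,\sigma_{T^{-1}(y)}\cdots\sigma_{T^{-(k-1)}(y)}
\]
inside $\GG(X,r)$, which exhibits $\GG(X, r^{(k)})$ as a subgroup of $\GG(X,r)$.

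The divisibility $m_k \mid m'$ will then be extracted by working at the level of the abelianization. Because $T$ is a solution automorphism with $T\,\sigma_y\,T^{-1} = \sigma_{T(y)}$, all the factors $\sigma_{T^{-i}(y)}$ project to the same class in $\GG(X,r)^{\mathrm{ab}}$, so the factorization above gives $\overline{\sigma_{ky}} = k\,\overline{\sigma_y}$ there; equivalently, the image of $\GG(X, r^{(k)})$ inside $\GG(X,r)^{\mathrm{ab}}$ coincides with $k\cdot\GG(X,r)^{\mathrm{ab}}$. Since multiplication by $k$ is invertible on the $q$-primary part of any abelian group for every prime $q \nmid k$, the coprime-to-$k$ part of this abelianized data is preserved by cabling. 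The main obstacle---which I plan to overcome by localizing the transitive action of $\GG(X,r)$ on $\GG(X,r)\cdot x$ at each prime $q \nmid k$ and invoking the fact that $T^{k}$ is the diagonal of $r^{(k)}$ (part~(1)) to carry the needed cyclic motion inside the $q$-Sylow---is to convert this abelianized equality into a genuine $q$-divisibility of $m'$. Multiplying across all primes $q \nmid k$ then produces $m_k \mid m'$.
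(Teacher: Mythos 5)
Part (1) of your proposal is correct and amounts to essentially the same computation as the paper's, phrased in the brace $G(X,r)$ rather than with frozen tuples: both arguments rest on the expansion $ka=a\,U(a)\cdots U^{k-1}(a)$ and the identity $\sigma_{T(a)}(a)=T(a)$, and your characterization of the diagonal via $c\cdot b=2c$ is a clean way to close the argument.

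Part (2) has a genuine gap, in two places. First, the relation $T\sigma_yT^{-1}=\sigma_{T(y)}$ that you invoke is false: the correct commutation rule is $T\sigma_y=\tau_y^{-1}T$ (relation \eqref{E:sigma_tau}), so $T$ conjugates $\sigma_y$ to $\tau_y^{-1}$, not to $\sigma_{T(y)}$; your justification that all the factors $\sigma_{U^i(y)}$ have the same class in $\GG(X,r)^{\mathrm{ab}}$ therefore collapses. Second, and more fundamentally, the abelianization of the multiplicative group is the wrong object: even if the image of $\GG(X,r^{(k)})$ in $\GG(X,r)^{\mathrm{ab}}$ were exactly $k\cdot\GG(X,r)^{\mathrm{ab}}$, this only controls $\GG(X,r^{(k)})$ modulo the commutator subgroup and says nothing about the index $[\GG(X,r):\GG(X,r^{(k)})]$ --- the subgroup could still meet $[\GG(X,r),\GG(X,r)]$ in something of index divisible by a prime $q\nmid k$, which would destroy the $q$-divisibility of $m'$. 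Your ``localize at $q$'' step is exactly where this breaks, and I do not see how to repair it from abelianized data. What the paper uses instead is the \emph{additive} group of the brace structure on $\GG(X,r)$: there $\sigma_{ky}=k\sigma_y$ holds on the nose (not merely modulo commutators), so $\GG(X,r^{(k)})$ coincides, as a subset of $\GG(X,r)$, with the additive subgroup $k\cdot(\GG(X,r),+)$; since the additive and multiplicative groups of a brace share the same underlying set, this forces $[\GG(X,r):\GG(X,r^{(k)})]=|(\GG(X,r),+)/k\,(\GG(X,r),+)|$ to be a product of powers of primes dividing $k$. Combined with the inclusion of stabilisers $\GG(X,r^{(k)})_x\leq\GG(X,r)_x$ and orbit--stabiliser, this gives $m'=mt/(p_1^{d_1}\cdots p_l^{d_l})$ with all $p_i\mid k$, whence $m_k\mid m'$. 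You already have the key ingredient (your factorization $\sigma_{ky}=\sigma_y\sigma_{U(y)}\cdots\sigma_{U^{k-1}(y)}$ is precisely the additive statement $\sigma_{ky}=k\sigma_y$ read in the brace), but you must replace the abelianization by the brace's additive group.
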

	
	In particular, if the solution $(X,r)$ is indecomposable and $\gcd(|X|,k)=1$, then $(X,r^{(k)})$ remains indecomposable (since $|X|_k=|X|$). Taking $k=|T|$, we then reduce the CMS theorem to Rump's result. On the other hand, taking $k=p$, $k=a$, and $k=2$ respectively, we obtain new decomposability theorems:
	
	\begin{thmA}\label{T:pq}
		Take an indecomposable solution $(X,r)$ of size $pq$, where $p \neq q$ are prime. Then its $T$-partition cannot contain a term $s$ satisfying $(p-1)q < s < pq$ and $\gcd(s,p)=1$.
	\end{thmA}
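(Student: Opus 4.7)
The plan is to reduce the statement to the CMS theorem by cabling with a sufficiently high power of $p$, following the recipe already used in the paper for the CMS theorem itself. Suppose for contradiction that $(X,r)$ is indecomposable and that its $T$-partition has a part $s$ with $(p-1)q<s<pq$ and $\gcd(s,p)=1$; list the parts as $s=s_1, s_2, \dots, s_t$.

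Two preliminary observations. First, $s$ lies strictly between the consecutive multiples $(p-1)q$ and $pq$ of the prime $q$, so $\gcd(s,q)=1$ and therefore $\gcd(s,pq)=1$. Second, since $\sum_{i}s_i=pq$ and $s>(p-1)q$, the remaining parts sum to $pq-s<q$, so each $s_i<q$ for $i\geq 2$.

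Choose $M$ large enough that $p^M$ absorbs the full $p$-part of every $s_i$ (for instance $M=v_p(|T|)$), and consider the cabled solution $(X,r^{(p^M)})$. Its diagonal map is $T^{p^M}$ by Theorem~B(1); by the standard cycle decomposition of a power of a permutation, the cycles of $T^{p^M}$ have lengths $s$ (unchanged since $\gcd(s,p^M)=1$) and, for each $i\geq 2$, the $p'$-part $s_i/p^{v_p(s_i)}$. By the observations above, every such length is coprime to both $p$ and $q$; hence $\gcd(|T^{p^M}|,pq)=1$, so the CMS theorem forces $(X,r^{(p^M)})$ to be decomposable.

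In the opposite direction, Theorem~B(2) shows that every $\GG(X,r^{(p^M)})$-orbit has size a multiple of $q$ (the largest divisor of $pq$ coprime to $p^M$). Each such $\GG$-orbit contains the $T^{p^M}$-orbit of any of its elements, so the $\GG$-orbit enclosing our distinguished $T^{p^M}$-orbit of size $s$ has size $\geq s>(p-1)q$; being a multiple of $q$ and at most $pq$, it must equal $pq$. Hence $(X,r^{(p^M)})$ is indecomposable, contradicting the previous paragraph. The only delicate point to verify is the cycle-structure claim for $T^{p^M}$ and the containment of diagonal-map orbits inside $\GG$-orbits; both are essentially standard facts, after which the argument is a clean combination of Theorems~A--B with CMS.
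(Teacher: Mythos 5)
Your proof is correct and follows essentially the same route as the paper: cable by a high power $p^M$ of $p$, use Theorem~\ref{T:cablingProperties} to see that the surviving $T^{p^M}$-orbit of size $s$ forces a single $\GG(X,r^{(p^M)})$-orbit (via Lemma~\ref{L:TandGorbits} and the divisibility by $q$), and then contradict this with the CMS theorem since all $T^{p^M}$-cycle lengths become coprime to $pq$. The two ``delicate points'' you flag are exactly the paper's Lemma~\ref{L:TandGorbits} and the standard cycle-splitting of permutation powers, so nothing is missing.
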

	For instance, for $|X|=14$, this excludes cycles of size $9,11$ and $13$, and for $|X|=15$, this excludes cycles of size $11,13$ and $14$.
	
	\begin{thmA}\label{T:ab}
		Take an indecomposable solution $(X,r)$ of size $ab$ and $T$-partition $(a,c,c')$, where the numbers $a,b,c,c'$ are pairwise coprime, except for, possibly, $c$ and $c'$. Then one cannot have $b>a+c$.
	\end{thmA}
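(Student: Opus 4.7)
The plan is to argue by contradiction. Assume $(X,r)$ is indecomposable with $T$-partition $(a,c,c')$ and $b>a+c$. The key move is to cable by $k=a$ and apply the CMS theorem to the cabled solution $(X,r^{(a)})$; the resulting decomposition will conflict with the size constraint on $\GG(X,r^{(a)})$-orbits coming from Theorem~\ref{T:cablingProperties}(2).

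First I compute the $T$-partition of $(X,r^{(a)})$. By Theorem~\ref{T:cablingProperties}(1) its diagonal map is $T^a$; the $a$-cycle of $T$ contributes $a$ fixed points, while the $c$- and $c'$-cycles each remain a single cycle since $\gcd(a,c)=\gcd(a,c')=1$. Hence $\PP(X,r^{(a)})=(c,c',1,\ldots,1)$ with $a$ ones. Because $c$ and $c'$ are each coprime to both $a$ and $b$, one gets $\gcd(|T^a|,|X|)=\gcd(\operatorname{lcm}(c,c'),ab)=1$, so the CMS theorem yields a non-trivial decomposition $X=X_1\sqcup X_2$ of $(X,r^{(a)})$. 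Next, indecomposability of $(X,r)$ together with Theorem~\ref{T:cablingProperties}(2) (applied with $m=ab$ and $k=a$, giving $m_a=b$ since $\gcd(a,b)=1$) forces every $\GG(X,r^{(a)})$-orbit to have size divisible by $b$; in particular $b\mid|X_1|$. Moreover $X_1$ is $T^a$-invariant, because for every $x$ the element $T^a(x)=(\tau^{(a)}_x)^{-1}(x)$ lies in the $\GG(X,r^{(a)})$-orbit of $x$ (the map $\tau^{(a)}$ being expressible in terms of the $\sigma^{(a)}$'s via involutivity). Hence $|X_1|$ is a sum of $T^a$-orbit sizes:
\[
|X_1|=\epsilon\, c+\epsilon'\,c'+i,\qquad \epsilon,\epsilon'\in\{0,1\},\ 0\le i\le a.
\]

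The main, and essentially only, remaining point is a short arithmetic case analysis on $(\epsilon,\epsilon')$, which will use the inequality $b>a+c$ in an essential way. If $\epsilon'=0$ then $0<|X_1|\le a+c<b$ by hypothesis, contradicting $b\mid|X_1|$. If $\epsilon'=1$, the identity $c+c'=a(b-1)$ gives $c'=ab-a-c>(a-1)b$, so $|X_1|\ge c'>(a-1)b$, while $|X_1|<ab$ by non-triviality; hence $|X_1|$ is an integer in the open interval $((a-1)b,ab)$, which contains no multiple of $b$, again contradicting $b\mid|X_1|$. Both cases being excluded, the assumed indecomposable solution cannot exist.
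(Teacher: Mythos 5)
Your proof is correct and follows essentially the same route as the paper: cable by $k=a$, observe that the cabled $T$-partition is $(c,c',1,\dots,1)$ so CMS forces decomposability, invoke Theorem~\ref{T:cablingProperties}(2) to make every $\GG(X,r^{(a)})$-orbit size a multiple of $b$, and derive a numerical contradiction from $b>a+c$. The only cosmetic difference is that the paper immediately selects the $\GG(X,r^{(a)})$-orbit not containing the $c'$-cycle (size $\le a+c<b$), whereas you run a two-case analysis on whether your part $X_1$ contains that cycle; your extra case $\epsilon'=1$ is handled correctly but is avoidable.
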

	As a consequence, indecomposable solutions $(X,r)$ of size $2b$ with odd $b \geq 5$ cannot have $T$-partition $(2,b-4,b+2)$.
	
	\begin{thmA}\label{T:2abc}
Take an indecomposable solution $(X,r)$ of size $2d$, with $d$ odd, and $T$-partition $(2a,b,c)$, where $\gcd(2d, abc)=1$ and $b \leq c$. Then $2a+b=c$.
	\end{thmA}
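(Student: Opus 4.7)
The plan is to cable with $k=2$, use the CMS theorem (available to us via the cabling machinery developed earlier in this paper) together with Theorem~\ref{T:cablingProperties}(2) to force a very constrained balanced decomposition of $(X, r^{(2)})$, and then pin down the conclusion by a parity-based case analysis.

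First I apply Theorem~\ref{T:cabling} with $k = 2$ to form $(X, r^{(2)})$; by Theorem~\ref{T:cablingProperties}(1) its diagonal map is $T^2$. Since $\gcd(2d, abc) = 1$ makes $a, b, c$ all odd, the $T$-orbit of size $2a$ splits into two $T^2$-orbits of size $a$, while the $T$-orbits of odd sizes $b$ and $c$ each remain a single $T^2$-orbit; hence $\PP(X, r^{(2)}) = (a, a, b, c)$. The order $|T^2|$ equals $\operatorname{lcm}(a, b, c)$ and is coprime to $2d$ by hypothesis, so the CMS theorem forces $(X, r^{(2)})$ to be decomposable. On the other hand, Theorem~\ref{T:cablingProperties}(2) applied with $k = 2$ and $m = 2d$ gives $m_k = d$ (using that $d$ is odd), so every $\GG(X, r^{(2)})$-orbit has size divisible by $d$. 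Combining decomposability with this divisibility, $(X, r^{(2)})$ has exactly two $\GG$-orbits $Y$ and $Z$, each of size $d$.

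Being sub-solutions of $(X, r^{(2)})$, both $Y$ and $Z$ are $T^2$-invariant, so each of the four $T^2$-orbits (of sizes $a, a, b, c$) lies entirely in one of them; some sub-collection of $\{a, a, b, c\}$ therefore sums to $d$. Since $d, a, b, c$ are all odd, the number of summands must be odd. Up to swapping $Y \leftrightarrow Z$, this leaves three cases. Choosing a single size-$a$ orbit gives $a = d$ with complementary sum $a + b + c = d$, forcing $b + c = 0$, which is impossible. Choosing the size-$b$ orbit gives $b = d$ and $2a + c = d = b$, hence $c = b - 2a < b$, contradicting $b \leq c$. Only the choice of the size-$c$ orbit survives, yielding $c = d$ and, by complementarity, $2a + b = d = c$, which is the asserted relation.

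I expect the main obstacle to be the case enumeration in the final step: one must carefully check that every partition of $\{a, a, b, c\}$ into two sub-collections summing to $d$ is ruled out by one of the available tools (parity of $d$, positivity of the parts, or the ordering $b \leq c$), leaving only the target configuration.
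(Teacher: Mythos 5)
Your proposal is correct and follows essentially the same route as the paper: cable by $2$, observe the $T^2$-partition is $(a,a,b,c)$, invoke the CMS theorem plus the orbit-divisibility from Theorem~\ref{T:cablingProperties}(2) to get exactly two $\GG(X,r^{(2)})$-orbits of size $d$, and finish with the parity case analysis. The paper leaves that final case analysis implicit ("this is possible only if $d=2a+b=c$"); you have merely written it out, correctly.
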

	For instance, for $|X|=18$ this excludes the $T$-partition $(10,7,1)$, and for $|X|=22$ this excludes the $T$-partitions $(10,9,3)$, $(10,7,5)$, and $(6,7,9)$. Neither of these are covered by Theorems \ref{T:pq} and \ref{T:ab}.
	
	In parallel with its decomposition into $T$-cycles, a solution carries several other relevant decompositions: into imprimitivity blocks, and into $\GG{(X,r^{(k)})}$-orbits (for well chosen $k$). Comparing them, and using the recent classification of primitive solutions from \cite{CJO_primitive}, we obtain

	\begin{thmA}\label{T:pq2}
	Take an indecomposable solution $(X,r)$ of size $pq$, with $p<q$ prime. Then its $T$-partition contains either only multiples of $q$, or at least one multiple of~$p$.
	\end{thmA}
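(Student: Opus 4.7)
The plan is to derive the dichotomy directly from the imprimitivity of $(X,r)$ via a prime-size quotient solution, combined with the CMS theorem (whose elementary proof is provided earlier in the paper).

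First I would invoke the classification in \cite{CJO_primitive}: since $|X|=pq$ is composite, $(X,r)$ cannot be primitive. Converting this into brace/cycle-set language, there must exist a non-trivial congruence on $X$, hence a surjective solution morphism $\pi\colon (X,r)\twoheadrightarrow(\bar X,\bar r)$ with fibers of size $p$ or $q$, so $|\bar X|\in\{q,p\}$ respectively. The quotient $(\bar X,\bar r)$ has prime size and is itself indecomposable, being the transitive image of an indecomposable solution.

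Next, I would apply the CMS theorem to $(\bar X,\bar r)$: since it is indecomposable of prime size, $\gcd(|\bar T|,|\bar X|)\ne 1$, so $\bar T$ has a cycle of length divisible by the prime $|\bar X|$. As cycle lengths of $\bar T$ sum to $|\bar X|$, this forces $\bar T$ to be a single $|\bar X|$-cycle.

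Finally, solution morphisms commute with the diagonal map, so $\pi\circ T=\bar T\circ\pi$. Consequently, for any $x\in X$ and any $k\ge 1$, the iterate $T^k(x)$ lies in the same fiber of $\pi$ as $x$ exactly when $|\bar X|\mid k$, which forces every $T$-cycle length to be a multiple of $|\bar X|$. If $|\bar X|=q$ the first alternative of the theorem holds, while if $|\bar X|=p$ every cycle of $T$ is a multiple of $p$ and the second alternative holds in its strongest form.

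The principal technical point will be converting ``non-primitive in the sense of \cite{CJO_primitive}'' into ``admits a non-trivial surjective morphism of solutions onto a strictly smaller target'', i.e., translating the permutation-group imprimitivity into a congruence on the cycle set. Once that bridge is in place, the remainder of the argument is essentially a one-line application of CMS together with the intertwining $\pi\circ T=\bar T\circ\pi$; notably no cabling is needed for this theorem, in contrast to Theorems \ref{T:pq}, \ref{T:ab}, and \ref{T:2abc}.
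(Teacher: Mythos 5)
Your argument hinges on the step you yourself flag as ``the principal technical point'': converting imprimitivity of the $\GG(X,r)$-action into a surjective morphism of solutions $\pi\colon (X,r)\twoheadrightarrow(\bar X,\bar r)$ onto a set of prime size. This is not a routine translation --- it is false in general. A block system preserved by $\GG(X,r)$ only guarantees that each $\sigma_z$ permutes the blocks; for the quotient to carry an induced solution one needs the much stronger condition that $\sigma_x$ and $\sigma_{x'}$ induce the \emph{same} permutation of blocks whenever $x$ and $x'$ lie in a common block (i.e., the partition must be a congruence of the underlying cycle set). There exist \emph{simple} indecomposable solutions of composite size, which are imprimitive by the classification of \cite{CJO_primitive} yet admit no proper epimorphic image at all; these break your bridge outright. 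A further warning sign is that your argument, if valid, would prove the much stronger statement that \emph{all} $T$-cycle lengths are divisible by a single common prime ($p$ or $q$), whereas the theorem only asserts a dichotomy between ``all multiples of $q$'' and ``at least one multiple of $p$''; the weaker formulation is weaker precisely because the quotient solution you want is generally unavailable.

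The paper's proof takes a different and longer route: it keeps the imprimitivity blocks as a purely combinatorial partition permuted (not quotiented) by $\GG(X,r)$, and extracts numerical constraints from how individual permutations $\sigma_x$ and $\tau_x$ move points across blocks. Cabling is essential there, contrary to your closing remark: Theorem~\ref{T:cablingProperties} is used first to normalise the situation so that $\GG(X,r)$ has order $p^aq^b$ and the $T$-partition is $(q,\dots,q,1,\dots,1)$, and again inside Case~1 to pass to $(X,r^{(p^m)})$ and compare its $\GG$-orbits (forced to have size exactly $q$) with a $T$-cycle that straddles several blocks. Your use of the CMS theorem and the intertwining $\pi\circ T=\bar T\circ\pi$ would be fine \emph{if} the quotient existed, but without the congruence the proof does not go through.
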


	In particular, for an indecomposable solution of size $n=2q$ with an odd prime $q$, the only possible $T$-partition with only odd terms is $(q,q)$.
	
	More generally, Theorem~\ref{T:cablingProperties} allows one to considerably reduce the list of possible $T$-partitions for indecomposable solutions. This has the potential to speed up algorithms constructing all indecomposable solutions of small size.
	
	In another vein, cabling can produce new indecomposable solutions out of old ones: see Example~\ref{EX:NewIndec}. 
	
	In the final part of the paper, cabling and brace ideas are used to explore an important invariant of a solution $(X,r)$, which we propose to call its \emph{Dehornoy class}. It is the smallest positive integer $m$ such that
	\begin{equation}\label{E:class}
	\forall x \in X, \; \sigma_{T^{m-1}(x)} \cdots \sigma_{T(x)}\sigma_x=\Id.
	\end{equation}
	Such an~$m$ always exists, and is $ < (n^2)!$. The elements $mx$, $x \in X$, then generate a normal free abelian subgroup of $G{(X,r)}$ of finite index. The corresponding finite quotient plays the same role as Coxeter groups play for Artin groups. In particular it suffices for reconstructing the Garside structure on the whole $G{(X,r)}$. For details, see \cite{MR3190423, MR3374524}. A partial generalisation to non-involutive solutions is proposed in \cite{MR3974961}. 
	
	Now, the permutation group $\GG{(X,r)}$ inherits the brace structure from $G{(X,r)}$ \cite{MR3177933}. We then give a new conceptual interpretation of the Dehornoy class in terms of the abelian group $(\GG(X,r),+)$:
	
	\begin{thmA}\label{T:DClass}
		The Dehornoy class $m$ of a solution $(X,r)$ is the least common multiple of the orders of the generators $\sigma_x$, $x \in X$, of the group $(\GG(X,r),+)$. If the solution is indecomposable, $m$ is the order of any $\sigma_x$.
	\end{thmA}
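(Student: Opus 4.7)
My plan is to translate the Dehornoy condition~\eqref{E:class} into an additive-order statement in $(\GG(X,r), +)$ via a computation inside the structure brace $G(X,r)$. The technical core is the following identity, which I would establish by induction on $k \geq 1$:
\[
T^{k-1}(x) \circ T^{k-2}(x) \circ \cdots \circ T(x) \circ x \;=\; k\, T^{k-1}(x) \quad \text{in } G(X,r),
\]
where the left-hand side uses the group operation $\circ$ of $G(X,r)$ and $k\,T^{k-1}(x)$ denotes the $k$-fold additive sum. The induction step rewrites the product as $T^{k-1}(x) \circ \bigl((k-1)\, T^{k-2}(x)\bigr)$, expands via the brace formula $a \circ b = a + \sigma_a(b)$, applies the additivity of $\sigma_a$ (a direct consequence of~\eqref{E:Brace}), and uses the fixed-point identity $\sigma_{T(y)}(y) = T(y)$ (a reformulation of $r(T(y),y)=(T(y),y)$) with $y = T^{k-2}(x)$.

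Next, $g \mapsto \sigma_g$ is a group homomorphism $(G(X,r), \circ) \to \mathbb{S}_X$, so applying it to the identity above with $k=m$ yields
\[
\sigma_{T^{m-1}(x)} \circ \sigma_{T^{m-2}(x)} \circ \cdots \circ \sigma_{T(x)} \circ \sigma_x \;=\; \sigma_{m\, T^{m-1}(x)}.
\]
The canonical surjection $G(X,r) \twoheadrightarrow \GG(X,r)$ is a brace morphism, so $m\, T^{m-1}(x)$ is sent to $m \cdot \sigma_{T^{m-1}(x)}$ in $(\GG(X,r), +)$. Since in any brace the additive zero coincides with the multiplicative identity $\Id$, the permutation $\sigma_{m\, T^{m-1}(x)}$ equals $\Id$ precisely when $m \cdot \sigma_{T^{m-1}(x)} = 0$ in $(\GG(X,r), +)$. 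As $T$ is a bijection of $X$, letting $y = T^{m-1}(x)$ range over $X$ shows that condition~\eqref{E:class} is equivalent to $m \cdot \sigma_y = 0$ for every $y \in X$. The minimal such $m$ is precisely the least common multiple of the additive orders of the $\sigma_y$.

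For the indecomposable case, I would exploit transitivity of $\GG(X,r)$ on $X$: given $x, y \in X$, choose $g \in G(X,r)$ with $\sigma_g(x) = y$. Pushing this equality through the brace quotient (and using that the $\sigma$-action intertwines brace morphisms, since $\sigma_a(b) = -a + a \circ b$ depends only on $+$ and $\circ$) yields $\sigma_y = \varphi(\sigma_x)$ in $\GG(X,r)$, where $\varphi$ denotes the $\sigma$-action of $\sigma_g$ on the brace $\GG(X,r)$. By the brace axiom, $\varphi$ is an additive automorphism of $(\GG(X,r), +)$, hence preserves additive orders. So all $\sigma_x$, $x \in X$, share a common additive order, which must equal $m$.

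The main obstacle is the inductive identity of the first paragraph; everything downstream is brace bookkeeping. I would pay particular attention to the additivity of $\sigma_a$ in its argument and to the coincidence $0 = \Id$ in any brace---small but crucial facts that underpin the translation between the multiplicative form of~\eqref{E:class} and the additive-order statement.
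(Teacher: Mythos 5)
Your proposal is correct and follows essentially the same route as the paper: your inductive identity $T^{k-1}(x)\circ\cdots\circ x = k\,T^{k-1}(x)$ is exactly the frozen-word formula~\eqref{E:kx} (after the substitution $y=T^{k-1}(x)$, $U=T^{-1}$), which the paper uses to rewrite~\eqref{E:class} as $m\sigma_x=0$, and your transitivity argument via the additive automorphisms $\lambda_{\sigma_g}$ is precisely the content and proof of Lemma~\ref{L:OrdersSameOrbit}. The only difference is that you rederive~\eqref{E:kx} from the brace axioms and the fixed-point identity rather than citing it, which is a sound, self-contained variant.
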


	Another type of problems where cabling can be useful is the structural study of braces. Since these questions are out of the focus of the present work, we simply illustrate this approach with a quick proof of two important properties of finite braces at the end of Section~\ref{S:cablingProperties}.
	
	\section{Cabling a solution}\label{S:cabling}
	
	In this section we prove Theorem~\ref{T:cabling}.
	
	Recall that, with respect to the operation~$+$, $G{(X,r)}$ is a free abelian group, and the elements $x \in X$ yield its basis \cite{MR2278047}. Therefore the map $\iota^{(k)}$ is injective.
	
	To get the second assertion of the theorem, we will prove an explicit formula for the extension $R$ of $r$ to $G{(X,r)}$:
	\begin{equation}\label{E:cabling_R}
	R(kx,ly)= (l\sigma_{kx}(y),k T^{k-1}\tau_{ly}T^{-k+1}(x)),
	\end{equation}
	where $k$ and $l$ are positive integers. This yields
	\begin{equation}\label{E:cabling_r}
	r^{(k)}(x,y)= (\sigma_{kx}(y), T^{k-1}\tau_{ky}T^{-k+1}(x)),
	\end{equation}
	and finishes the proof of Theorem~\ref{T:cabling}.
	
	Recall that the operation~$+$ on $G{(X,r)}$ is a natural extension of the law
	\[x+y = x \sigma_x^{-1}(y), \qquad x,y \in X.\]
	In particular,
	\begin{equation}\label{E:kx}
	kx=x U(x) U^2(x) \cdots U^{k-1}(x),
	\end{equation}
	where \[U \colon x \mapsto \sigma_x^{-1}(x)\] is the inverse of the diagonal map~$T$. One recognises the frozen words from~\cite{MR3190423} (for $k=2$), and the twisted powers $x^{[k]}$ from~\cite{MR3374524} (for general~$k$).
	
	Let us look at 
	\begin{equation}\label{E:frozen}
	r_{k,l}(\ x U(x) U^2(x) \cdots U^{k-1}(x) \ ,\ y U(y) U^2(y) \cdots U^{l-1}(y)\ )=(\overline{u},\overline{w}),
	\end{equation}
	where the tuples $(x_1,\ldots,x_s)$ are denoted by $x_1 \cdots x_s$ for simplicity, and the solution $r$ is extended, this time, to the powers of $X$:
\[r_{k,l} = (r_{l}\cdots r_{1}) \cdots (r_{k+l-2}\cdots r_{k-1}) (r_{k+l-1}\cdots r_k) \colon X^k \times X^l \to X^l \times X^k.\]
These maps induce the solution $R$ on $G{(X,r)}$, as explained in \cite{MR2383056} with an inductive argument, and in \cite{MR3558231} with a graphical argument. Both entries in~\eqref{E:frozen} are \emph{frozen} tuples, that is, they remain unchanged when $r$ is applied to any neighbouring positions, since $r(z,U(z)) =(z,U(z))$ for all $z \in X$. But the YBE for $r$ guarantees that
	\[r_i r_{k,l} = \begin{cases}
	r_{k,l} r_{k+i} & \text{ if } 1 \leq i \leq l-1,\\
	r_{k,l} r_{i-l} & \text{ if } l+1 \leq i \leq l+k-1.
	\end{cases}\]
	Here $r_i$ is the solution $r$ applied at the positions $i$ and $i+1$ of a tuple. As a result, $\overline{u}$ and $\overline{w}$ are also frozen:
	\[\overline{u} = y' U(y') \cdots U^{l-1}(y')=ly', \qquad \overline{w} = x' U(x') \cdots U^{k-1}(x')=kx'.\]
	Thus $R(kx,ly)= (ly',kx')$. Further, \eqref{E:frozen} implies 
	\begin{align*}
	y'&= \sigma_{x U(x) U^2(x) \cdots U^{k-1}(x)}(y) = \sigma_{kx}(y),\\
	U^{k-1}(x') &= \tau_{y U(y) U^2(y) \cdots U^{l-1}(y)}U^{k-1}(x) = \tau_{ly}U^{k-1}(x),
	\end{align*}
	hence $x'=U^{-k+1}\tau_{ly}U^{k-1}(x) = T^{k-1}\tau_{ly}T^{-k+1}(x)$, as announced.
	
	\section{Properties of cabled solutions}\label{S:cablingProperties}
	
	In this section we first prove Theorem~\ref{T:cablingProperties} and then turn to other properties of cabled solutions.
	
	\begin{proof}[Proof of Theorem~\ref{T:cablingProperties}]
		For all positive integer $k$ and $x \in X$, the tuple 
		\[x U(x) U^2(x) \cdots U^{2k-1}(x) \in X^{2k}\] is frozen. Since applying the solution $r_{k,k}$ to a $2k$-tuple boils down to applying $r$ repeatedly at different positions, one gets
		\begin{align*}
		&r_{k,k}(x U(x) U^2(x) \cdots U^{k-1}(x),U^{k}(x) U^{k+1}(x)  \cdots U^{2k-1}(x)) \\
		&=(x U(x) U^2(x) \cdots U^{k-1}(x),U^{k}(x) U^{k+1}(x) \cdots U^{2k-1}(x).
		\end{align*} 
		In other words, $R(kx,kU^{k}(x)) = (kx,kU^{k}(x))$, that is, \[
		r^{(k)}(x,U^{k}(x))=(x,U^{k}(x)).
		\]
		Since $T$ is the inverse of $U$, this yields $r^{(k)}(T^{k}(x),x)=(T^{k}(x),x)$. Therefore $T^{k}$ is the diagonal map for $r^{(k)}$.
		
		Now,  let $x\in X$ lie in the $\GG{(X,r)}$-orbit of size $m$ and in the $\GG{(X,r^{(k)})}$-orbit of size $m'$. Denote by $\GG{(X,r)}_x$ and $\GG{(X,r^{(k)})}_x$ its stabilisers in the two groups. One has
		\begin{align*}
		|\GG{(X,r)}| &= m |\GG{(X,r)}_x|, & |\GG{(X,r^{(k)})}| &= m' |\GG{(X,r^{(k)})_x}|.
		\end{align*}
		
		The permutation groups $\GG{(X,r)}$ and $\GG{(X,r^{(k)})}$ inherit brace structures from the corresponding structure groups $G{(X,r)}$ and $G{(X,r^{(k)})}$ (see \cite{MR3177933}). Moreover, the abelian group $(\GG{(X,r^{(k)})},+)$ is obtained from the abelian group $(\GG{(X,r)},+)$ by multiplying each of its generators $\sigma_x$, $x \in X$, by $k$. Thus its size is the size of $(\GG{(X,r)},+)$ divided by some product $p_1^{d_1} \cdots p_l^{d_l}$ of powers of prime divisors of~$k$. Also, since the permutation group $\GG{(X,r^{(k)})}$ is the subgroup of $\GG{(X,r)}$ generated by the permutations $\sigma_{kx}$, $x \in X$, the stabiliser $\GG{(X,r^{(k)})}_x$ is a subgroup of $\GG{(X,r)}_x$. Hence $|\GG{(X,r^{(k)})}_x| = |\GG{(X,r)}_x| \ / \ t$ for some positive integer $t$. Summarising, we obtain 
		\begin{align*}
		m' &= {|\GG{(X,r^{(k)})}|} \ / \ {|\GG{(X,r^{(k)})}_x|} = \frac{|\GG{(X,r)}|}{p_1^{d_1} \cdots p_l^{d_l}}  \ / \ \frac{|\GG{(X,r)}_x|}{t} = \frac{m t}{p_1^{d_1} \cdots p_l^{d_l}}.
		\end{align*}
		Since this fraction is an integer, it is a multiple of an integer of the form $\dfrac{m}{p_1^{d'_1} \cdots p_l^{d'_l}}$. Recalling that the $p_i$ are prime divisors of~$k$, we see that $\frac{m}{p_1^{d'_1} \cdots p_l^{d'_l}}$ is a multiple of the maximal divisor $m_k$ of $m$ which is coprime to $k$, as announced.
	\end{proof}
	
	\begin{pro}\label{P:cablingIterate}
		The iteration of cablings remains a cabling. More precisely, given a solution $(X,r)$ and positive integers $k$ and $k'$, one has
		\[(r^{(k)})^{(k')} = r^{(kk')}.\]
	\end{pro}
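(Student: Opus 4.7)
The plan is to verify the identity $(r^{(k)})^{(k')} = r^{(kk')}$ by direct computation, using the explicit cabling formula \eqref{E:cabling_r} applied to the solution $r^{(k)}$, together with Theorem~\ref{T:cablingProperties}(1) which identifies its diagonal map as $T^k$. Write $\sigma^{(k)}_x := \sigma_{kx}$ and $\tau^{(k)}_y := T^{k-1}\tau_{ky}T^{-k+1}$ for the sigma/tau maps of $r^{(k)}$. Applying \eqref{E:cabling_r} to $r^{(k)}$ with parameter $k'$ gives
\[(r^{(k)})^{(k')}(x,y) = \bigl(\sigma^{(k)}_{k' \cdot_{(k)} x}(y),\ T^{k(k'-1)}\,\tau^{(k)}_{k' \cdot_{(k)} y}\, T^{-k(k'-1)}(x)\bigr),\]
where $k' \cdot_{(k)} z$ denotes $k'$-fold addition inside the structure group $G(X,r^{(k)})$.

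The first key step is to unpack $k' \cdot_{(k)} z$ via \eqref{E:kx}, noting that the $U$-map of $r^{(k)}$ is $(T^k)^{-1} = U^k$, so
\[k' \cdot_{(k)} z = z \cdot_{(k)} U^k(z) \cdot_{(k)} U^{2k}(z) \cdots \cdot_{(k)} U^{(k'-1)k}(z).\]
Since $\sigma^{(k)} \colon G(X,r^{(k)}) \to \operatorname{Sym}(X)$ is a group homomorphism, $\sigma^{(k)}_{k'\cdot_{(k)}x}$ becomes the corresponding composition of $\sigma^{(k)}_{U^{jk}(x)} = \sigma_{k\,U^{jk}(x)}$. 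Re-expanding each $k\,U^{jk}(x)$ via \eqref{E:kx} and concatenating, the factors line up exactly as
\[x\cdot U(x)\cdot U^2(x)\cdots U^{kk'-1}(x) = (kk')\,x\]
inside $G(X,r)$. Consequently $\sigma^{(k)}_{k'\cdot_{(k)}x} = \sigma_{(kk')x}$, which is precisely the first slot of $r^{(kk')}(x,y)$.

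For the second slot, the analogous computation uses that $\tau$ is an \emph{anti}-homomorphism, so the conjugating powers $T^{-k+1}$ and $T^{k-1}$ telescope:
\[\tau^{(k)}_{U^{(k'-1)k}(y)} \cdots \tau^{(k)}_{U^k(y)}\,\tau^{(k)}_y = T^{k-1}\,\tau_{(kk')y}\,T^{-k+1},\]
and inserting this into the outer $T^{\pm k(k'-1)}$ factors yields $T^{kk'-1}\tau_{(kk')y}T^{-(kk'-1)}$, matching $r^{(kk')}$ exactly. The main thing to be careful about is bookkeeping: tracking that $\sigma^{(k)}$ and $\tau^{(k)}$ extend to homomorphisms from the \emph{multiplicative} group of $G(X,r^{(k)})$ (with the correct orientation for $\tau$), and that the cascade of $U^{jk}$-shifts in $G(X,r^{(k)})$ reassembles, once pushed through $\sigma$ and $\tau$, into the single long frozen word $(kk')x$ (resp.\ $(kk')y$) in $G(X,r)$; no deeper fact is needed beyond \eqref{E:cabling_r}, \eqref{E:kx}, and Theorem~\ref{T:cablingProperties}(1).
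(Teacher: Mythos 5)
Your proof is correct, but it takes a more computational route than the paper's. The two arguments agree on the first component: both reduce to checking that $k'$-fold addition (in the cabled structure) of $kx$ is $(kk')x$, which you justify carefully by expanding frozen words via \eqref{E:kx} and using that the inverse diagonal of $r^{(k)}$ is $U^k$ --- a detail the paper leaves implicit. The divergence is in the second component: the paper does not compute it at all, instead invoking relation \eqref{E:sigma_tau} ($T\sigma_x = \tau_x^{-1}T$) to argue that the $\sigma$-component of an involutive non-degenerate solution determines the $\tau$-component, so that agreement of the first slots forces agreement of the second. You verify the second slot directly, using the anti-homomorphism property of $\tau$ to merge the factors $\tau_{kU^{jk}(y)}$ into $\tau_{(kk')y}$ and telescoping the conjugating powers of $T$; this is a clean and correct calculation (one small wording point: the telescoping of the $T^{\pm(k-1)}$ factors is just cancellation of adjacent inverses, independent of the anti-homomorphism property, which is instead what lets you fuse the $\tau$'s). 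What your approach buys is a self-contained check that the explicit formula \eqref{E:cabling_r} is literally stable under iteration, at the cost of more bookkeeping; what the paper's approach buys is brevity, at the cost of leaning on the uniqueness-from-$\sigma$ principle and on the implicit compatibility of the additive structures of $G(X,r^{(k)})$ and $G(X,r)$ under $\iota^{(k)}$.
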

	
	\begin{proof}
		Formula \eqref{E:cabling_r} implies
		\begin{align*}
		(r^{(k)})^{(k')} (x,y) &= (\sigma_{k'kx}(y),\cdot),& r^{(kk')}(x,y)&= (\sigma_{kk'x}(y),\cdot).
		\end{align*}
		Recall the relation
		\begin{equation}\label{E:sigma_tau}
		T \sigma_x=\tau_x^{-1}T
		\end{equation}
		connecting $\sigma$'s and $\tau$'s (see for instance \cite[Proposition 2.2]{MR1722951}). It implies that the $\sigma$-component uniquely determines a solution. We are done.
	\end{proof}
	
	Recall that a solution $(X,r)$ is called \emph{retractable} if for some $x \neq x' \in X$, one has $\sigma_x=\sigma_{x'}$ (and hence $\tau_x=\tau_{x'}$). Identifying all such $x$ and $x'$, one gets the \emph{retraction} $\operatorname{Ret}(X,r)$ of $(X,r)$; it is a solution again, as explained in \cite{MR1722951}. This is an important property of solutions: see \cite{MR2885602} and references thereto. 
	
	\begin{pro}\label{P:cablingRetract}
		If a solution $(X,r)$ is retractable, then so are all its cablings. More precisely, $\operatorname{Ret}(X,r^{(k)})$ is a quotient of $\operatorname{Ret}(X,r)^{(k)}$ for all positive integers $k$.
	\end{pro}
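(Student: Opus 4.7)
The plan is to reduce both assertions to the single implication
\[
\sigma_x = \sigma_{x'} \;\Rightarrow\; \sigma^{(k)}_x = \sigma^{(k)}_{x'}, \qquad x, x' \in X.
\]
Granted this, a pair $x \neq x'$ witnessing retractability of $(X,r)$ also witnesses retractability of $(X, r^{(k)})$; and since cabling is functorial on morphisms of solutions (immediate from \eqref{E:cabling_r}, once one rewrites $\sigma_{kx}$ multiplicatively as below), the retraction morphism $\pi\colon (X,r) \to \operatorname{Ret}(X,r)$ upgrades to a morphism $\pi\colon (X, r^{(k)}) \to \operatorname{Ret}(X,r)^{(k)}$. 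Its fibres are contained in those of the retraction of $(X, r^{(k)})$ by the displayed implication, so the universal property of the quotient produces the desired surjective morphism of solutions $\operatorname{Ret}(X,r)^{(k)} \twoheadrightarrow \operatorname{Ret}(X, r^{(k)})$.

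For the key implication, the first step is to use \eqref{E:cabling_r} to write $\sigma^{(k)}_x = \sigma_{kx}$, viewed inside the extended solution $R$ on $G{(X,r)}$. By \eqref{E:kx},
\[
kx = x \cdot U(x) \cdot U^2(x) \cdots U^{k-1}(x)
\]
multiplicatively in $G{(X,r)}$. Since $a \mapsto \sigma_a$ is the $\lambda$-homomorphism of the brace $G{(X,r)}$ -- that is, a group homomorphism from $(G{(X,r)}, \cdot)$ to the permutations of $G{(X,r)}$ -- this yields
\[
\sigma^{(k)}_x = \sigma_x \sigma_{U(x)} \sigma_{U^2(x)} \cdots \sigma_{U^{k-1}(x)},
\]
and it suffices to show $\sigma_{U^j(x)} = \sigma_{U^j(x')}$ for every $j \geq 0$ whenever $\sigma_x = \sigma_{x'}$.

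To close the loop, the retraction quotient $\pi\colon (X,r) \to \operatorname{Ret}(X,r)$ is a morphism of solutions, hence commutes with the diagonal map $T$ and with $U = T^{-1}$. Starting from $\pi(x) = \pi(x')$ and iterating gives $\pi(U^j(x)) = \pi(U^j(x'))$, which is exactly $\sigma_{U^j(x)} = \sigma_{U^j(x')}$, for every $j$. The step that requires most care is the invocation of the multiplicativity $\sigma_{ab} = \sigma_a \sigma_b$: it is the extension of $\sigma$ to the whole structure group $G{(X,r)}$, i.e.\ the brace $\lambda$-map, rather than the restriction of $\sigma$ to $X$, that is a group homomorphism. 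Once this point is unambiguous, the remainder of the argument is a formal calculation.
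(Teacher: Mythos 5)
Your proof is correct, but the key computation is routed differently from the paper's. The paper works on the \emph{additive} side of the brace $\GG(X,r)$: the projection $G(X,r)\to\GG(X,r)$ is a brace morphism, so $\sigma_{kx}=k\sigma_x$ in $(\GG(X,r),+)$, and the implication $\sigma_x=\sigma_{x'}\Rightarrow\sigma_{kx}=\sigma_{kx'}$ is then a one-line consequence of applying the additive $k$-th multiple to both sides. You instead stay on the \emph{multiplicative} side: you expand $kx$ via \eqref{E:kx} as the frozen product $x\,U(x)\cdots U^{k-1}(x)$, use that the $\lambda$-map is a homomorphism on $(G(X,r),\cdot)$ to get $\sigma_{kx}=\sigma_x\sigma_{U(x)}\cdots\sigma_{U^{k-1}(x)}$, and then need the auxiliary fact that the retraction morphism intertwines $U=T^{-1}$ in order to propagate $\sigma_x=\sigma_{x'}$ to $\sigma_{U^j(x)}=\sigma_{U^j(x')}$ for all $j$. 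Both arguments are valid; the paper's is shorter and needs no statement about $U$ descending to the retraction, while yours has the merit of making explicit two points the paper leaves implicit in its final sentence, namely that cabling is functorial for morphisms of solutions and that the ``more precisely'' clause is an instance of the universal property of the quotient. Your closing caveat is also well placed: the multiplicativity $\sigma_{ab}=\sigma_a\sigma_b$ holds for the extension of $\sigma$ to the structure group (the brace $\lambda$-map), not for a putative operation on $X$ itself.
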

	
	\begin{proof}
		Using the brace structure $\GG{(X,r)}$ inherits from $G{(X,r)}$, we can write $\sigma_{kx}=k\sigma_x$. Thus the relation $\sigma_x=\sigma_{x'}$ implies $\sigma_{kx}=\sigma_{kx'}$. From \eqref{E:cabling_r}, one then concludes that elements $x$ and $x'$ identified in $\operatorname{Ret}(X,r)$ are necessarily identified in $\operatorname{Ret}(X,r^{(k)})$ as well.
	\end{proof}
	
	Until now, all connections between solutions and braces that we used went through the brace structures on the structure and permutation groups of a solution. But one can go the other way round, and define a solution on any brace \cite{MR2278047}. This gives one the intuition on how to cable a brace. Concretely, take a brace $(B,+,\circ)$ and a positive integer $k$. The elements $ka$, $a \in B$, form a sub-brace $B^{(k)}$ of $B$, called its \emph{$k$-cabling}. Indeed, we have
	\begin{align*}
	ka+kb &= k(a+b),\\
	ka \circ kb &= k( (ka)\circ b -(k-1)a),
	\end{align*}  
	as follows from the commutativity of $+$ and from relation~\eqref{E:Brace} respectively. The additive structure of $B^{(k)}$ is obtained from $(B,+)$ by multiplication by $k$. One can thus easily determine its size. The multiplicative group $(B,\circ)$ then has a subgroup of the same size. Here are two direct applications:	
	\begin{enumerate}
	\item  A quick proof of the solvability of the multiplicative group of a finite brace (first established in \cite[Theorem 2.15]{MR1722951}). Indeed, let $(B,+,\circ)$ be a brace of size $ab$ with $\gcd(a,b)=1$. Looking at the additive structure, one sees that $B^{(a)}$ is of size $b$. Therefore $(B^{(a)},\circ)$ is a $b$-Hall subgroup of $(B,\circ)$. Thus $(B,\circ)$ is solvable. 
	\item Let $B$ be a finite brace with cyclic additive group, and $d$ a divisor of its size $|B|$. Then $(B,\circ)$ contains a subgroup of size $d$. Indeed, looking at the additive structure and using the cyclicity of $(B,+)$, one sees that $B^{(|B|/d)}$ is of size $d$.
	\end{enumerate}

	\section{Applications: (in)decomposability results}\label{S:cablingApplications}
	
	We now turn to applications of Theorem~\ref{T:cablingProperties}. Its assertion is particularly transparent when the solution $(X,r)$ is indecomposable, and the cabling parameter $k$ is coprime to its size $|X|$, which is now the size of the only $\GG{(X,r)}$-orbit. Since $|X|_k=|X|$, the theorem implies that the solution $(X,r^{(k)})$ remains indecomposable, with diagonal map $T^k$. Here are some interesting particular cases.
	
	\begin{enumerate}
		\item If $\gcd(|T|,|X|)=1$, then $(X,r^{(|T|)})$ has to be indecomposable, with diagonal map $\Id$, which is impossible by Rump's theorem. We thus recover the Camp-Mora--Sastriques (CMS) theorem.	
		\item If the cycle decompositions of $T$ and $T^k$ are different, we get a new indecomposable solution on the same set~$X$. For instance, if $(X,r)$ is the indecomposable solution with $T$-partition $(2,6)$ (cf. \cite[Table 3.2]{RV}), we have $|X|=2+6=8$, which is comprime with $k=3$. Then $(X,r^{(3)})$ is an indecomposable solution with $T$-partition $(2,2,2,2)$, and hence not isomorphic to $(X,r)$.
	\end{enumerate}
	
	To treat other cases, we need the following elementary observation.
	
	\begin{lem}\label{L:TandGorbits}
		Given a solution $(X,r)$ with diagonal map~$T$, any $T$-orbit in~$X$ lies entirely within a single $\GG{(X,r)}$-orbit. 
	\end{lem}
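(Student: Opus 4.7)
The plan is very short because the statement essentially unpacks the definition of $U$. First I would recall from Section~\ref{S:cabling} that the diagonal map $T$ has inverse $U\colon x \mapsto \sigma_x^{-1}(x)$. In particular, $U(x)$ is obtained from $x$ by applying the permutation $\sigma_x^{-1}$, which lies in $\GG(X,r)$ since this group is generated by the $\sigma_y$'s (and groups are closed under inverses). Hence $U(x)$ belongs to the $\GG(X,r)$-orbit of $x$.

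Iterating, each $U^j(x)$ lies in the $\GG(X,r)$-orbit of $x$ (one picks up an additional factor $\sigma_{U^{j-1}(x)}^{-1} \in \GG(X,r)$ at each step). Since $T = U^{-1}$ and the $T$- and $U$-orbits are cyclic subsets of the finite set $X$, they coincide. Therefore every element of the $T$-orbit of $x$ lies in the $\GG(X,r)$-orbit of $x$, which is the claim.

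There is no real obstacle here: the only ingredient beyond the definitions is the identity $U = T^{-1}$, already recorded just after formula~\eqref{E:kx}. I would phrase the proof in one or two sentences in the final write-up.
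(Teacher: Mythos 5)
Your proof is correct, and it takes a genuinely different (and more elementary) route than the paper. You work directly from the formula $U(x)=\sigma_x^{-1}(x)$: since $\sigma_x^{-1}\in\GG(X,r)$, each step $U^{j-1}(x)\mapsto U^j(x)$ stays inside the $\GG(X,r)$-orbit of $x$, and since $\langle T\rangle=\langle U\rangle$, the $T$-orbit equals the $U$-orbit and is therefore contained in that $\GG(X,r)$-orbit. The paper instead argues structurally: it takes the $\GG(X,r)$-orbit $Y$ of $x$, invokes the result of Etingof--Schedler--Soloviev that $r$ restricts to a solution on $Y\times Y$, observes that the diagonal map of the restricted solution is the restriction of $T$, and concludes that the $T$-orbit of $x$ lies in $Y$. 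Your version avoids any appeal to the restriction theorem and uses only the definition of $U$ and the fact that $\GG(X,r)$ is generated by the $\sigma_y$; the paper's version is equally short but leans on an external structural fact. Both are complete proofs; yours is arguably the more self-contained of the two.
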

	
	\begin{proof}
		Take an element $x \in X$ from a $\GG{(X,r)}$-orbit $Y$. By~\cite{MR1722951}, $r$ restricts to $Y \times Y$ and defines a solution on~$Y$. The diagonal map of this restricted solution has to be the restriction of~$T$ to~$Y$. Thus the $T$-orbit of~$x$ lies entirely within~$Y$.
	\end{proof}	
	
	Now, take an indecomposable solution $(X,r)$ and a cabling parameter $k$ which is not coprime to $|X|$, but which makes $|X|_k$ big enough. Then the sizes of all  $\GG{(X,r^{(k)})}$-orbits are multiples of $|X|_k$. On the other hand, by the above lemma, all the $T^k$-orbits lie entirely inside these $\GG{(X,r^{(k)})}$-orbits. In several cases, for numerical reasons, this can happen only when there is only one $\GG{(X,r^{(k)})}$-orbit. The solution $(X,r^{(k)})$ is then indecomposable, which imposes some constraints on the sizes of the $T^k$-orbits, for instance by the CMS theorem. This leads to a contradiction in various cases which are not themselves covered by the CMS theorem. Here are some of them.
	
	\begin{enumerate}
		\item Take an indecomposable solution $(X,r)$ of size $pq$, where $p \neq q$ are primes. Assume that a $T$-orbit is of size $(p-1)q < s < pq$, with $\gcd(s,p)=1$. We will show that this is impossible, and thus prove Theorem~\ref{T:pq}. For any $t \in \N$, the diagonal map $T^{p^t}$ of $(X,r^{(p^t)})$ inherits this orbit, since $\gcd(s,p)=1$. Thus this $T^{p^t}$-orbit of size $s$ lies entirely within a $\GG{(X,r^{(p^t)})}$-orbit, whose size is a multiple of $|X|_{p^t}=q$. Since $(p-1)q < s < pq$, this $\GG{(X,r^{(p^t)})}$-orbit has to be the whole set~$X$. In other words, the $p^t$-cabled solution $(X,r^{(p^t)})$ is indecomposable. But, for $t$ big enough, the sizes of all $T^{p^t}$-orbits are coprime to $p$. But they are also coprime to $q$ since there is one orbit of size $(p-1)q < s < pq$ and several smaller orbits of total size $pq-s < q$. As a consequence, $\gcd(|X|,|T^{p^t}|) = \gcd(pq,|T^{p^t}|) =1$. By the CMS theorem, the solution $(X,r^{(p^t)})$ is then decomposable, contradiction. 
		\item Take an indecomposable solution $(X,r)$ of size $ab$ and $T$-partition $(a,c,c')$, where $b>a+c$, and the numbers $a,b,c,c'$ are pairwise coprime, except for, possibly, $c$ and $c'$. We will show that this is impossible, and thus prove Theorem~\ref{T:ab}. The $a$-cabling of $(X,r)$ has $T$-partition $(c,c',1, \ldots,1)$, with $a$ ones. Since  $\gcd(|X|,|T^{a}|)$ divides $\gcd(ab,cc')=1$, the CMS theorem says that $(X,r^{(a)})$ is decomposable, and there are at least two $\GG(X,r^{(a)})$-orbits. One of them does not contain the $T^{a}$-orbit of size $c'$, hence its size is $\leq c + a <b$, which is impossible for a multiple of $|X|_a=|ab|_a=b$.
		\item Take an indecomposable solution $(X,r)$ of size $2d$, with $d$ odd, and $T$-partition $(2a,b,c)$, where $\gcd(2d, abc)=1$ and $b\leq c$. We will show that it imposes heavy restrictions on $a,b,c$, and thus prove Theorem~\ref{T:2abc}. The $2$-cabling of $(X,r)$ has $T$-partition $(a,a,b,c)$, since $b$ and $c$ are odd. The sizes of its $\GG{(X,r^{(2)})}$-orbits are multiples of $(2d)_2=d$, as $d$ is odd. Since  $\gcd(|X|,|T^{2}|)$ divides $\gcd(2d,abc)=1$, the CMS theorem says that $(X,r^{(2)})$ is decomposable, so there are precisely two $\GG{(X,r^{(2)})}$-orbits, each of size~$d$. Each of the four $T^2$-orbits lies entirely in one of these two $\GG{(X,r^{(2)})}$-orbits. Since the numbers $a,b,c,d$ are all odd, this is possible only if $d=2a+b=c$.
		\item Assume that $(X,r)$ is an indecomposable solution of size $30$. We will show that its $T$-partition cannot be $(21,7,1,1)$. Indeed, the $3$-cabled solution $(X,r^{(3)})$ would then have $T$-partition $(7,7,7,7,1,1)$, and be decomposable by the CMS theorem. On the other hand, its $\GG(X,r^{(3)})$-orbits are multiples of $30_3=10$. But the only way to divide the multiset $(7,7,7,7,1,1)$ into parts whose total sums are all divisible by~$10$ is to take the whole multiset. Thus the solution $(X,r^{(3)})$ is indecomposable, contradiction. As in the above situations, this example generalises to an infinite family.
	\end{enumerate}
	
	In another vein, cabling can produce new indecomposable solutions out of old ones.
	
	\begin{exa}\label{EX:NewIndec}
		Consider the indecomposable solution $(X,r)$ (found by a computer), where $X=\{1,\dots,8\}$, $r(x,y)=(\sigma_x(y),\sigma^{-1}_{\sigma_x(y)}(x))$, and
		\begin{align*}
		&\sigma_1=(12)(34)(56)(78), 
		&& \sigma_2=(12)(36)(47)(58),
		&& \sigma_3=(1543)(2678),\\
		& \sigma_4=(1367)(2854),
		&&\sigma_5=(17)(24)(38)(56),
		&&\sigma_6=(1763)(2458),\\
		&\sigma_7=(1345)(2876),
		&&\sigma_8=(15)(26)(38)(47).
		\end{align*}
		Its diagonal map is $T=(12)(345678)$, so its $T$-partition is $(2,6)$. According to Theorem~\ref{T:cablingProperties}, its $3$-cabling $(X,r^{(3)})$ is still indecomposable (as $\gcd(3,8)=1$) and has $T$-partition $(2,2,2,2)$. It is thus not isomorphic to $(X,r)$.
	\end{exa}

	\section{Primitivity and further (in)decomposability results}\label{S:Primitivity}
	
	A solution $(X,r)$ is called \emph{imprimitive} if the $\GG(X,r)$-action on~$X$ is so, and \emph{primitive} otherwise. That is, an imprimitive solution $X$ admits a non-trivial decomposition into blocks which is preserved by the $\GG(X,r)$-action. A recent result from \cite{CJO_primitive} asserts that, up to isomorphism, the only primitive solutions are the permutation solutions $(\Z/p\Z, \  r(a,b)=(b-1,a+1))$, with $p$ prime. By \cite{MR1722951}, these are the only indecomposable solutions of prime size. Thus, in the interesting case of non-prime size, an indecomposable solution can be split into imprimitivity blocks. Their interaction with $T$-cycles is quite intricate. We will now analyse this interaction in the particular settings of Theorem~\ref{T:pq2}, and deduce a proof of that theorem.
	
	Consider an indecomposable solution $(X,r)$ of size $pq$, with primes $p<q$. Assume that its $T$-partition contains no multiples of~$p$, and at least one term which is not a multiple of $q$. We will obtain a contradiction, proving Theorem~\ref{T:pq2}.
	
	By Theorem~\ref{T:cablingProperties}, one can choose a suitable $k$ coprime with $pq$ such that the solution $(X,r^{(k)})$ is still indecomposable, has $T$-partition with all terms of the form $p^\alpha q^\beta$, and permutation group $\GG(X,r^{(k)})$ of size $p^a q^b$. (For the latter property, recall that the $k$-cabling multiplies all the elements of $(\GG(X,r),+)$ by $k$.) Since the cabling can only split $T$-orbits into equal parts, the $T$-partition of $(X,r^{(k)})$ still contains no multiples of~$p$, and at least one term which is not a multiple of $q$. Thus it suffices to work with solutions having these properties.
	
	Summarizing all the constraints on the $T$-partition we obtained, one sees that it has to be of the form $(q,\ldots,q,1,\ldots,1)$, with at least one term $1$ and one term $q$ (otherwise Rump's theorem applies). 

	Since $pq$ is not prime, our solution is imprimitive. Thus $X$ non-trivially decomposes into blocks preserved by the $\GG(X,r)$-action. Since $(X,r)$ is indecomposable, $\GG(X,r)$ permutes these blocks in a transitive manner, hence they are all of the same size. This leaves us with two possibilities.
	
	\medskip
	\noindent \textbf{Case 1:} There are $p$ blocks of size $q$.
	
	\noindent Since our solution is indecomposable, some map $\sigma_x$ permutes $1<p' \leq p$ blocks in a cyclic manner. It thus has an orbit of size $p'q'$, with $1 \leq q' \leq q$. Since this size is of the form $p^\alpha q^\beta$ (the group $\GG(X,r)$ having the size of this form), and since $p<q$ are primes, one necessarily has $p'=p$. Thus $\sigma_x$ permutes all the $p$ blocks in a cyclic manner. As a result, $x$ and $U(x)=\sigma_x^{-1}(x)$ lie in different blocks. Since $U=T^{-1}$, one obtains a $T$-cycle which does not entirely lie in a single block. Now, again by Theorem~\ref{T:cablingProperties}, one can choose a suitable $m$ such that the solution $(X,r^{(p^m)})$ is decomposable, with orbits whose sizes are multiples of $q$. The permutation group $\GG(X,r^{(p^m)})$ is a subgroup of the group $\GG(X,r)$ of size $p^a q^b$. Its size, as well as the sizes of all the $\GG(X,r^{(p^m)})$-orbits, are then of the same form. Being multiples of $q$, the sizes of the $\GG(X,r^{(p^m)})$-orbits are then all precisely $q$. One of them has to entirely contain our $T$-cycle of size $q$ (which is also a $T^{(p^m)}$-cycle). This $\GG(X,r^{(p^m)})$-orbit then intersects several blocks. Since the subgroup $\GG(X,r^{(p^m)})$ of $\GG(X,r)$ permutes these blocks, our $\GG(X,r^{(p^m)})$-orbit has to be of size $p'q'$, with $1<p' \leq p$ and $1 \leq q' \leq q$. But $q$ cannot be written in this way.
	
	\medskip	
	\noindent \textbf{Case 2:} There are $q$ blocks of size $p$.
	
	\noindent The permutations $\tau_x$ of $X$, for $x \in X$, generate a transitive group, since so do the $\sigma_x$, and  the two are related by the conjugation by~$T$ (see relation \eqref{E:sigma_tau}). Therefore some element $f \in X$ fixed by $T$ is moved to an element $c$ from a $T$-cycle of size $q$ by some $\tau_x$. That is, $c=\tau_x(f)$. We will use the relation
	\[T(\tau_x(f)) = \tau_{\sigma_{f}(x)}(f)\]
	from \cite[Lemma 3.8]{RV}. Applied $k$ times, it yields
	\[T^k(c) = \tau_{\sigma_{f}^k(x)}(f).\]	
	As a result, the size of the $\sigma_{f}$-orbit containing $x$ is a multiple of the size of the $T$-orbit containing $c$, which is $q$. Since $p<q$, it intersects  $q'>1$ blocks of size $p$. On the other hand, since $\sigma_{f}$ fixes $f$, it fixes at least one block. Thus $q'<q$. Summarizing, the size $q$ of our orbit decomposes as $p'q'$, with $1 \leq p' \leq p$ and $1 < q' < q$. But this is impossible.
	
	\begin{rem}
	Along the lines of the proof of \cite[Lemma 3.8]{RV}, one can establish the relation
	\[T^k(\tau_x(y)) = \tau_{\sigma_{ky}(x)}(T^k(y)),\]
	valid for all $x,y \in X$ (not necessarily $T$-fixed) and all $k$.
	\end{rem}

	\section{Applications: Dehornoy class}\label{S:DClass}
	
	In this section, we will prove Theorem~\ref{T:DClass}. The main ingredient is
	\begin{lem}\label{L:OrdersSameOrbit}
		For all elements $x$ from the same $\GG(X,r)$-orbit of a solution $(X,r)$, the order of $\sigma_x$ in the finite abelian group $(\GG(X,r),+)$ is the same.
	\end{lem}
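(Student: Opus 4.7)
The plan is to exploit the brace structure on $\GG(X,r)$ inherited from $G(X,r)$, together with the key fact that the multiplicative group of any brace acts by additive automorphisms. Recall that in any brace $(B,+,\circ)$, the map $\lambda_a(b):=-a+a\circ b$ defines a group homomorphism $\lambda\colon (B,\circ)\to\Aut(B,+)$. Specialising to $B=\GG(X,r)$, each $\lambda_\alpha$ is therefore an automorphism of the finite abelian group $(\GG(X,r),+)$, and in particular preserves the additive order of any element.

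The crucial bridge between the abstract $\lambda$-action and the concrete $\GG(X,r)$-action on $X$ is the identity
\[
\lambda_\alpha(\sigma_y) \;=\; \sigma_{\alpha(y)} \qquad \text{for all } \alpha\in\GG(X,r),\; y\in X.
\]
I would verify this first in the case $\alpha=\sigma_x$. Working inside the brace $G(X,r)$, the very definitions of $+$ (namely $x+y=x\,\sigma_x^{-1}(y)$) and $\circ$ give the well-known identity $\lambda_x(y)=\sigma_x(y)$ for $x,y\in X$. Projecting this relation through the brace morphism $G(X,r)\twoheadrightarrow\GG(X,r)$, $x\mapsto\sigma_x$, yields $\lambda_{\sigma_x}(\sigma_y)=\sigma_{\sigma_x(y)}$. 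The case of a general $\alpha$ then follows by induction on the length of a word in the generators $\sigma_x$, using that $\lambda$ is a homomorphism, i.e.\ $\lambda_{\alpha\circ\beta}=\lambda_\alpha\circ\lambda_\beta$.

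With this formula in hand, the lemma is immediate: if $x,y$ lie in a common $\GG(X,r)$-orbit, choose $\alpha\in\GG(X,r)$ with $y=\alpha(x)$; then $\sigma_y=\lambda_\alpha(\sigma_x)$, and since $\lambda_\alpha$ is an additive automorphism, $\sigma_x$ and $\sigma_y$ must have the same order in $(\GG(X,r),+)$. The main obstacle is conceptual rather than computational: one must unpack carefully the brace structure on $\GG(X,r)$ and check that the abstract $\lambda$-map of that brace really does restrict to the expected action on the distinguished generators $\sigma_y$. Once the identity $\lambda_\alpha(\sigma_y)=\sigma_{\alpha(y)}$ is established, no further calculation is needed.
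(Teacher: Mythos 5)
Your proof is correct and follows essentially the same route as the paper: both arguments reduce the lemma to the identity $\lambda_{\sigma_x}(l\sigma_y)=l\sigma_{\sigma_x(y)}$ in $(\GG(X,r),+)$, which shows that the additive order of $\sigma_y$ is preserved along the $\GG(X,r)$-orbit of $y$. The only cosmetic difference is how this identity is obtained — the paper specialises its explicit cabling formula for $R(kx,ly)$ at $k=1$, whereas you invoke the general brace fact that $\lambda\colon(B,\circ)\to\Aut(B,+)$ is a homomorphism into additive automorphisms — but the substance is identical.
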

	
	\begin{proof}
		Relation \eqref{E:cabling_R} specialised at $k=1$ yields the relation
		\begin{equation}\label{E:equivar}
		\sigma_x(ly) = l\sigma_x(y)
		\end{equation} in the structure group $G(X,r)$. In its quotient $\GG(X,r)$, it becomes \[
		\sigma_{\sigma_x}(l\sigma_y) = l\sigma_{\sigma_x(y)}.
		\]
		Thus $l\sigma_y$ vanishes if and only if $l\sigma_{\sigma_x(y)}$ does. As a consequence, $\sigma_y$ and $\sigma_{\sigma_x(y)}$ have the same order in $(\GG(X,r),+)$ for all $x,y \in X$.  
	\end{proof}
	
	\begin{rem}
		Relation~\eqref{E:equivar} means that the cabling operation $\iota^{(l)} \colon x \mapsto lx$ is equivariant with respect to the left $G(X,r)$-actions induced by the maps $\sigma_x$. It thus behaves better than the diagonal map~$T$, which instead of the equivariance obeys the less tractable rule \eqref{E:sigma_tau}.
	\end{rem}
	
	\begin{proof}[Proof of Theorem~\ref{T:DClass}]
		Relation~\eqref{E:class} can be rewritten as 
		\[\forall x \in X, \; \sigma_{x} \sigma_{U(x)}\cdots \sigma_{U^{m-1}(x)}=\Id,\]
		which, by~\eqref{E:kx}, simply means $m\sigma_{x}=0$. This yields the first assertion of the theorem. The second then directly follows from Lemma \ref{L:OrdersSameOrbit}.		
	\end{proof}
	
	We finish with the following observation, relating the Dehornoy class of a solution to its diagonal map:
	
	\begin{pro}
		Let $(X, r)$ be a solution. Then the order $|T|$ of its diagonal map divides its Dehornoy class $m$. 
	\end{pro}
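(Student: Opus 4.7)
The plan is to combine the algebraic characterisation of the Dehornoy class provided by Theorem~\ref{T:DClass} with the cabling/diagonal identity of Theorem~\ref{T:cablingProperties}(1). The proof of Theorem~\ref{T:DClass} shows that $m$ is the smallest positive integer with $m\sigma_x = 0$ in the abelian group $(\GG(X,r),+)$ for every $x \in X$. Invoking the brace identity $\sigma_{kx} = k\sigma_x$ already exploited in the proof of Proposition~\ref{P:cablingRetract}, this translates to $\sigma_{mx} = \Id_X$ for every $x \in X$.

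Next I will examine the $m$-cabled solution $(X, r^{(m)})$ through formula~\eqref{E:cabling_r}: its $\sigma$-component is the map $x \mapsto \sigma_{mx} = \Id_X$. Since a non-degenerate involutive solution is determined by its $\sigma$-component via relation~\eqref{E:sigma_tau} (as already used in the proof of Proposition~\ref{P:cablingIterate}), and since the flip $(x,y) \mapsto (y,x)$ has trivial $\sigma$-component, one concludes that $r^{(m)}$ is the flip. Consequently, its diagonal map is $\Id_X$.

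Finally, Theorem~\ref{T:cablingProperties}(1) identifies the diagonal map of $(X, r^{(m)})$ with $T^m$. Comparing the two descriptions yields $T^m = \Id_X$, i.e.\ $|T|$ divides $m$. I do not anticipate any real obstacle: the substantive input consists of the two cited theorems, linked by the brace identity $\sigma_{kx} = k\sigma_x$, and the rest is bookkeeping. The only small verification worth pointing out is that a trivial $\sigma$-component indeed forces a trivial $\tau$-component in a non-degenerate involutive solution, which follows immediately from~\eqref{E:sigma_tau}.
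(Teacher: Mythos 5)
Your proof is correct and rests on the same mechanism as the paper's: $m$-cabling trivialises all the $\sigma$'s (since $\sigma_{mx}=m\sigma_x=0$) while turning the diagonal into $T^m$, so $T^m=\Id$. The only difference is presentational: the paper performs a self-contained two-way evaluation of $r_{m,1}$ on the frozen tuple $x\,U(x)\cdots U^m(x)$, whereas you outsource that computation to Theorem~\ref{T:cablingProperties}(1) and then argue via~\eqref{E:sigma_tau} that a trivial $\sigma$-component forces $r^{(m)}$ to be the flip --- a harmless detour, since the inverse diagonal of $(X,r^{(m)})$ is already $x\mapsto\sigma_{mx}^{-1}(x)=x$ directly.
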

	
	\begin{proof}
		We need to prove the relation $T^m=\Id$, or, equivalently, $U^m=\Id$. Let us compute 
		\begin{equation}\label{E:OrderVsDClass}
		r_{m,1}(x U(x) U^2(x) \cdots U^{m-1}(x), U^m(x))
		\end{equation}
		in two ways. On the one hand, the definition of the Dehornoy class allows one to simplify \eqref{E:OrderVsDClass} as
		\[r_{m,1}(mx, U^m(x)) = (\sigma_{mx}(U^m(x)), \cdot) = ((m\sigma_{x})(U^m(x)), \cdot) = (U^m(x), \cdot).\]
		On the other hand, since the tuple $x U(x) U^2(x) \cdots U^m(x)$ is frozen, \eqref{E:OrderVsDClass} equals
		\[(x, U(x) U^2(x) \cdots U^{m-1}(x)U^m(x)).\]
		Hence $U^m(x) = x$ for all $x \in X$.
	\end{proof}

	\subsection*{Acknowledgements}
	
	This work was partially supported by Conicet and 
	the project OZR3762 of Vrije Universiteit Brussel. The authors are grateful to Arpan Kanrar who suggested a stronger version of Theorem~\ref{T:2abc}, and to the reviewers for a thorough reading of the paper.
	
	\bibliographystyle{abbrv}
	\bibliography{refs}
	
\end{document}